\newtheorem{theor}{Theorem}
\newtheorem{defi}[theor]{Definition}
\newtheorem{lemma}[theor]{Lemma}
\newtheorem{rem}[theor]{Remarks}
\newcommand{\Z}{\mathbb{Z}}
\newcommand{\F}{\mathbb{F}}
\newcommand{\R}{\mathbb{R}}
\newcommand{\C}{\mathbb{C}}
\newcommand{\1}{\mathbbm{1}}
\newcommand{\dist}{\mathrm{dist}}
\newcommand{\supp}{\mathrm{supp}}
\newcommand{\Median}{\mathrm{Median}}
\newcommand{\Rep}{\mathrm{Rep}}
\newcommand{\Span}{\mathrm{span}}
\renewcommand{\Re}{\mathrm{Re}}
\renewcommand{\Im}{\mathrm{Im}}
\newlength{\dwidehatheight}
\begin{document}

\begin{center}
{\Large 
Explicit universal sampling sets in finite vector spaces}

Lucia Morotti
\end{center}

\begin{abstract}
In this paper we construct explicit sampling sets and present reconstruction algorithms for Fourier signals on finite vector spaces $G$, with $|G|=p^r$ for a suitable prime $p$. The two sampling sets have sizes of order $O(pt^2r^2)$ and $O(pt^2r^3\log(p))$ respectively, where $t$ is the number of large coefficients in the Fourier transform. The algorithms approximate the function up to a small constant of the best possible approximation with $t$ non-zero Fourier coefficients. The fastest of the algorithms has complexity $O(p^2t^2r^3\log(p))$.
\end{abstract}

\section{Introduction}

The problem of compressive sensing originated in the context of Fourier series \cite{MR2230846}. The aim is to reconstruct a linear combination of a small number of complex exponentials from as few samples as possible, when only the number of the exponentials entering the linear combination is known. The additional challenge was to come up with practical and efficient methods for the reconstruction (which by its combinatorial nature is NP-hard, unless extra information is available).
 
 Later on, the compressed sensing problem evolved to include a more general setup. The overall problems and main challenges, however, remained the same; and they concerned mostly the construction of sampling schemes that would allow (and guarantee) efficient reconstruction from as few measurements as possible, and the design of efficient reconstruction algorithms. For the latter, $\ell^1$-minimization turned out to be a popular choice, and the chief technical condition to guarantee success for the reconstruction method was the {\em restricted isometry condition} (see \cite{MR2243152} for the first introduction of these ideas, and \cite{MR3100033} for an in-depth study). However, there still remained the problem of constructing measurement matrices (or, in the Fourier case, sampling sets) for which the RIP was actually provably fulfilled. An important (somewhat partial) answer to this problem was provided by random methods; e.g., in the case of random sampling of Fourier matrices the RIP assumption turns out 
to be true under rather weak assumptions on the number of samples \cite{MR2417886}, at least with high probability. 
 
 However, the case of {\em deterministic sampling sets} with {\em provably} guaranteed reconstruction poses altogether different challenges. Firstly, the verification of properties like RIP is a very complex problem by itself \cite{MR3164973}, hence special care must be taken to allow such estimates. A first successful example for a deterministic construction with the RIP property was presented by DeVore \cite{MR2371999}. For the Fourier setting, in \cite{b1,MR2628828} sampling sets and inversion algorithms were constructed for cyclic groups; an alternative construction of deterministic sampling sets guaranteeing RIP for the cyclic case was developed in \cite{5464880}.  All these construction have a common restriction, commonly known as {\em quadratic bottleneck}: In order to reconstruct linear combinations of $t$ basis vectors, they need $O(t^2)$ samples. A more recent paper by Bourgain and collaborators \cite{MR2817651} managed to improve this to $O(t^{2-\epsilon})$, for a very small $\epsilon>0$, using 
rather 
involved arguments from additive combinatorics. 
 
 This paper considers efficiently sampling Fourier-sparse vectors on finite abelian groups. It can be seen as complementary to \cite{b1,5464880,MR2628828}, with the main difference being that this paper focuses on {\em finite vector spaces} rather than cyclic groups. We develop a general, simple scheme for the design of sampling sets, together with algorithms that allow reconstruction.
 
 Hence the new methods provide an alternative means of explicitly designing {\em universal sampling sets} in a specific family of finite abelian groups $G$, i.e., sampling sets $\Omega \subset G$ that allow the reconstruction of any given linear combination of $t$ characters of $G$ from its restriction on $\Omega$, together with {\em explicit} inversion algorithms, both for the noisy and noise free cases. The groups $G$ we consider are finite vector spaces, and the sampling sets will be written as unions of suitable affine subspaces. The sampling sets actually fulfill the RIP property, which allows one to use the standard methods such as $\ell^1$-minimization. However, the special structure of the sampling set makes the inversion algorithm particularly amenable to the use of a more structured (and potentially faster) reconstruction algorithm, using FFT methods. It should be stressed, though, that our construction is {\em not} able to beat the quadratic bottleneck.

\section{Notation}\label{s1}

Let $p$ be a prime and $r$ a positive integer. When considering their additive groups structure we have that $(\Z/p\Z)^r\cong\F_p^r$. The vector space structure of $\F_p^r$ will enable us to construct the sampling sets needed in the algorithms described in this paper. We will write $\F_p^r$ also when considering only its additive group structure. Also in order to avoid complicated notations we will identify, where needed, elements of $\Z$ with their images in $\F_p$, so that for example 1 could be viewed as either an element of $\Z$ or of $\F_p$.

We will write $H\leq G$ for a subgroup $H$ of $G$. Subsets of $\F_p^r$ are subgroups if and only if they are vector spaces (since $\F_p$ is a field of prime order). So when looking at $\F_p^r$ as a vector space we will write $H\leq \F_p^r$ for a subspace $H$. For any subgroup $H\leq G$ we will also write $\Rep(G/H)$ for a set of representatives of cosets of $H$ in $G$.

In Section \ref{s2} we will also be working with both $\F_p$ and $\F_q$, where $q$ is a power of $p$. Since vector spaces over $\F_q$ can be viewed also as vector spaces over $\F_p$, we will write $\dim_{\F_p}(V)$ and $\dim_{\F_q}(V)$ for the dimension of $V$ as a vector space over $\F_p$ or $\F_q$ respectively. Similarly we will write $\Span_{\F_p}(A)$ and $\Span_{\F_q}(A)$ for the span of $A$ as vector space over $\F_p$ or $\F_q$ respectively.

Let $\widehat{\F_p^r}$ consist of all group homomorphisms $\F_p^r\rightarrow\C$. We have that
\[\widehat{\F_p^r}=\{\chi_{(y_1,\ldots,y_r)}:y_i\in\F_p\}=\{\chi_y:y\in\F_p^r\},\]
where, if $\omega_p=e^{2\pi i/p}$, we define  $\chi_{(y_1,\ldots,y_r)}(x_1,\ldots,x_r):=\omega_p^{x_1y_1+\ldots+x_ry_r}$ for $(x_1,\ldots,x_r)\in\F_p^r$. This is well defined since $\omega_p^p=1$. Also it is easy to check that $\chi_y\chi_z=\chi_{y+z}$ for $y,z\in\F_p^r$.

For any function $f:(\Z/p\Z)^r\rightarrow\C$ let $\widehat{f}:\widehat{\F_p^r}\rightarrow\C$ be its Fourier transform, so that
\[f=\sum_{\chi_y\in\widehat{\F_p^r}}\widehat{f}(\chi_y)\chi_y.\]
Formulas for the Fourier transform for $\F_p^r$ are given by
\[\widehat{f}(\chi_y)=\frac{1}{p^r}\sum_{x\in\F_p^r}f(x)\chi_y(x)^{-1}.\]
Under the bijection $\chi_y\leftrightarrow y$ this corresponds (up to a scalar multiple) to the usual discrete Fourier transform on the grid $\{0,1,\ldots,p-1\}^r$ (under the identification of this grid with $\F_p^r$).

For any function $h:X\rightarrow Y$ and any subset $Z\subseteq X$ let $h|_Z:Z\rightarrow Y$ be the restriction of $h$ to $Z$.

Let now $t$ be a positive integer and assume that $\widehat{f}=\widehat{g}+\epsilon$ with $\widehat{g}$ having support consisting of at most $t$ elements and with $\|\epsilon\|_1$ ``small''. We will present in this paper two algorithms which approximate $\widehat{f}$ by a function $\widehat{f'}$ which, like $\widehat{g}$, also has support consisting of at most $t$ elements. Further $\widehat{f'}$ satisfies
\[\|\widehat{f}-\widehat{f}'\|_1\leq (1+3\sqrt{2})\|\epsilon\|_1,\]
that is $\widehat{f'}$ is close to the best possible such approximation of $\widehat{f}$. In particular if $\|\epsilon\|_1=0$ then $\widehat{f'}=\widehat{f}$, so that the algorithms reconstruct $\widehat{f}$ (and then also $f$) in this case.

The algorithms presented here are based on similar algorithms presented in \cite{b1,MR2628828} for the group  of unit length elements of $\C$. 
 As in \cite{b1,MR2628828} sampling sets will be obtained by taking unions of cosets of subgroups. In \cite{b1,MR2628828} subgroups of distinct prime orders are considered. This cannot be applied here since $\F_p^r$ is a $p$-group. However, using the vector space structure of $\F_p^r$, we can construct many small subgroups with certain ``orthogonality'' properties (as will be seen in the next section), which will be used to construct the sampling sets presented here.

\section{Complexities of sets and algorithms}

Before describing the sampling sets and the corresponding algorithms, we present their size/running time complexities and compare them to those presented in other papers.

\vspace{6pt}
\noindent
\begin{tabular}{|l|l|l|l|}
\hline
&(Size of) group&Sampling set&Time complexity\\
\hline
Theorem \ref{t2}&$\F_p^r$&$O(pt^2r^2)$&$O(p^rt^2r^2)$\\
\hline
Theorem \ref{t9'}&$\F_p^r$&$O(pt^2r^3\log(p))$&$O(p^2t^2r^3\log(p))$\\
\hline
Algorithm 1 of \cite{b1}&$\Z/n\Z$&$O(\frac{t^2\log(n)^2}{\log(t\log(n))})$&$O(\frac{t^2\log(n)^3}{\log(t\log(n))})$\\
\hline
Section 2 of \cite{MR2817651}&$\Z/p\Z$&$O(t^{2-\epsilon})$&$O(pt^{2-\epsilon})$\\
\hline
Section 3 of \cite{MR2371999}&n&$O(\frac{t^2\log(n/t)^2}{\log(t)^2})$&$O(n\frac{t^2\log(n/t)^2}{\log(t)^2})$\\
\hline
Section 2 of \cite{5464880}&$\Z/p\Z$&$\Omega(t^2)$&$\Omega(pt^2)$\\
\hline
Algorithm 1 of \cite{MR2628828}&$\Z/n\Z$&$O(t^2\log(n)^2)$&$O(nt\log(n)^2)$\\
\hline
Algorithm 2 of \cite{MR2628828}&$\Z/n\Z$&$O(t^2\log(n)^4)$&$O(t^2\log(n)^4)$\\
\hline
\end{tabular}

\vspace{6pt}

As can be seen from the table, the second algorithm presented here needs more sampling points then the first one (it needs about $r\log(p)=\log(|\F_p^r|)$ times as many sampling points). However the first algorithm has a factor $p^r=|\F_p^r|$ in its running time, while the second algorithm is much faster.

The complexities for the construction of the sampling sets used in Theorem \ref{t2} and \ref{t9'} are given as follows.

\vspace{6pt}
\noindent
\begin{tabular}{|l|l|}
\hline
&Time complexity for the construction of the sampling set\\
\hline
Theorem \ref{t2}&$O(pt^2r^5\log(p)^2)$\\
\hline
Theorem \ref{t9'}&$O(pt^2r^6\log(p)^3)$\\
\hline
\end{tabular}

\section{Construction of the sampling sets}\label{s2}

The sets constructed here are unions of (shifted) subspaces of $\F_p^r$. We begin with a lemma that will allow us to find certain families of subspaces which will be of basic importance for the construction of our sampling sets.

\begin{lemma}\label{l1}
Let $p$ be a prime. Also let $1\leq h\leq r$ be integer and define $q:=p^h$ and $s:=\lceil r/h\rceil$. Let $\pi:\F_q^s\rightarrow\F_p^r$ be a surjective homomorphism and fix $a_1,\ldots,a_n\in\F_q^s$. For $1\leq i\leq n$ integer choose $H_i\leq\F_p^r$ of dimension $h$ containing $\pi(\Span_{\F_q}\{ a_i\})$. Fix a positive integer $m\leq n$. If
\[\Span_{\F_q}\{ a_{i_1},\ldots,a_{i_m}\}=\F_q^s\]
for every $1\leq i_1<\ldots<i_m\leq n$ then
\[\Span_{\F_p}\{ H_{i_1},\ldots,H_{i_m}\}=\F_p^r\]
for every $1\leq i_1<\ldots<i_m\leq n$.
\end{lemma}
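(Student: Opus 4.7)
The plan is to push the $\F_q$-span hypothesis through the surjection $\pi$ into $\F_p^r$, and then use the containments $\pi(\Span_{\F_q}\{a_i\}) \subseteq H_i$ to conclude. The key conceptual point is that $\pi$ is a surjective homomorphism of abelian groups (equivalently, an $\F_p$-linear surjection), even though it is not $\F_q$-linear, since its codomain $\F_p^r$ is not being regarded as an $\F_q$-vector space. So $\pi$ preserves Minkowski sums of subgroups of $\F_q^s$, mapping them to subgroups (i.e., $\F_p$-subspaces) of $\F_p^r$.

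Concretely, I would first note the formal identity that for any subset $A$ of $\F_q^s$,
$$\Span_{\F_q}(A) = \sum_{a \in A} \Span_{\F_q}\{a\},$$
where the right-hand side is a Minkowski sum of subgroups. Applied to $A = \{a_{i_1}, \ldots, a_{i_m}\}$, the hypothesis becomes $\sum_{j=1}^m \Span_{\F_q}\{a_{i_j}\} = \F_q^s$. Applying the surjective homomorphism $\pi$ and using that $\pi$ carries the sum of subgroups to the sum of their images yields
$$\sum_{j=1}^m \pi(\Span_{\F_q}\{a_{i_j}\}) = \pi(\F_q^s) = \F_p^r.$$
Each summand on the left is contained in $H_{i_j}$ by hypothesis, so $H_{i_1} + \cdots + H_{i_m}$ (which is the same as $\Span_{\F_p}\{H_{i_1}, \ldots, H_{i_m}\}$) contains $\F_p^r$, giving the desired equality.

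There is no real technical obstacle in the argument; it is essentially formal once one recognizes that $\pi$, being a group homomorphism, commutes with sums of subgroups. The dimension constraint $\dim H_i = h$ and the exact value $s = \lceil r/h\rceil$ play no role in this lemma: the former is irrelevant to the spanning conclusion (any larger $H_i$ would work just as well here), and the latter simply guarantees that a surjection $\pi : \F_q^s \to \F_p^r$ exists at all, since $\dim_{\F_p}\F_q^s = sh \geq r$. These conditions are presumably there because they are needed for the counting and dimension arguments in the sampling-set construction in which this lemma will be applied.
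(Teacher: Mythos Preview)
Your proof is correct and is essentially the same as the paper's. The paper writes the key chain of containments in one line,
\[
\F_p^r\geq\Span_{\F_p}\{H_{i_1},\ldots,H_{i_m}\}\geq\pi(\Span_{\F_q}\{a_{i_1},\ldots,a_{i_m}\})=\pi(\F_q^s)=\F_p^r,
\]
while you unpack the middle inequality via the Minkowski-sum decomposition and the fact that $\pi$ preserves sums of subgroups; and your remarks about the roles of $\dim H_i=h$ and $s=\lceil r/h\rceil$ match the paper's preliminary observations that these conditions guarantee the existence of $\pi$ and of the $H_i$ rather than being used in the spanning argument itself.
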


In the lemma $\{a_{i_1},\ldots,a_{i_m}\}$ is a subset of $\{a_1,\ldots,a_n\}$ with $m$ elements. Also, if $z$ is a primitive element of $\F_q$, so that elements of $\F_q$ can be written as $b_0+b_1z+\ldots+b_{h-1}z^{h-1}$ with $b_i\in\F_p$, we can take $\pi:\F_q^s\rightarrow\F_p^r$ to be the composition of $\pi_1:\F_q^s\rightarrow\F_p^{hs}$ and $\pi_2:\F_p^{hs}\rightarrow\F_p^r$ given as follows
\begin{align*}
\pi_1(\sum b_i^{(1)}z^i,\ldots,\sum b_i^{(s)}z^i)&:=(b_0^{(1)},\ldots,b_{h-1}^{(1)},\ldots,b_0^{(s)},\ldots,b_{h-1}^{(s)}),\\
\pi_2(c_1,\ldots,c_{hs})&:=(c_1,\ldots,c_r).
\end{align*}
If $\pi(\Span_{\F_q}\{ a_i\})$ has dimension $h$ as vector space over $\F_p$, then we have to take $H_i=\pi(\Span_{\F_q}\{ a_i\})$. If $\pi(\Span_{\F_q}\{ a_i\})$ has dimension less than $h$ as vector space over $\F_p$, then we can choose $H_i=\Span_{\F_p}\{\pi(\Span_{\F_q}\{ a_i\}),e_1,\ldots,e_j\}$ for a certain $1\leq j\leq r$ (here $e_k$ is the $k$-th standard basis element of $\F_p^r$).

\begin{proof}
Notice that a surjective homomorphism $\pi:\F_q^s\rightarrow\F_p^r$ always exists since, as $\F_p$-vector spaces,
\[\F_p^r\leq\F_p^{h\times\lceil r/h\rceil}\cong\F_q^s.\]
In particular $\dim_{\F_p}(\F_p^r)\leq\dim_{\F_p}(\F_q^s)$. Also we can construct $H_i$, as
\[\dim_{\F_p}(\pi(\Span_{\F_q}\{ a_i\}))\leq\dim_{\F_p}(\Span_{\F_q}\{ a_i\})=h\dim_{\F_q}(\Span_{\F_q}\{ a_i\})\leq h.\]

If $\Span_{\F_q}\{ a_{i_1},\ldots,a_{i_m}\}=\F_q^s$ then
\[\F_p^r\geq\Span_{\F_p}\{ H_{i_1},\ldots,H_{i_m}\}\geq\pi(\Span_{\F_q} \{a_{i_1},\ldots,a_{i_m}\})=\pi(\F_q^s)=\F_p^r\]
and so the lemma follows.
\end{proof}

If some subspaces $H_1,\ldots,H_n$ of $\F_p^r$ satisfy $\Span_{\F_p}\{ H_{i_1},\ldots,H_{i_m}\}=\F_p^r$ for every $1\leq i_1<\ldots<i_m\leq n$, we say that $H_1,\ldots,H_n$ are \emph{$m$-generating}. Similarly, if $a_1,\ldots,a_n\in\F_q^s$ satisfy $\Span_{\F_q}\{ a_{i_1},\ldots,a_{i_m}\}=\F_q^s$ for every $1\leq i_1<\ldots<i_m\leq n$, we say that $\{a_1,\ldots,a_n\}$ is \emph{$m$-generating}. For $m=s$  we have that $\{a_1,\ldots,a_n\}\subseteq \F_q^s$ is $s$-generating if and only if any $s$ of its elements are linearly independent. So a subset of $\F_q^s$ is $s$-generating if and only if it has full spark.

It is not always possible to construct a $m$-generating subset of $\F_q^s$ of size $n$, for any $n$. We will in the next lemma show though that $\F_q^s$ has a $s$-generating multiset for $\F_q^s$ of size $q+1$ (we will consider multisets and not sets in order to cover also the case $s=1$) and construct one such multiset explicitly.

\begin{lemma}\label{t51}
Let
\[A:=\{(1,x,\ldots,x^{s-1}):x\in\F_q\}\cup\{(0,\ldots,0,1)\in\F_q^s\}.\]
Then $A\subseteq\F_q^s$ is $s$-generating and $|A|=q+1$.
\end{lemma}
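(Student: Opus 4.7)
The plan is to verify both claims directly, using the Vandermonde determinant.

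For the cardinality, I would simply note that the $q$ vectors $(1,x,\ldots,x^{s-1})$ are pairwise distinct (distinct $x$'s give distinct second coordinates, say), all have first coordinate equal to $1$, and the extra vector $(0,\ldots,0,1)$ has first coordinate $0$, so it is not among them. Hence $|A| = q+1$.

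For the $s$-generating property, I would take any $s$ distinct elements of $A$ and show they are linearly independent (equivalently, span $\F_q^s$). There are exactly two cases depending on whether the exceptional vector $v := (0,\ldots,0,1)$ is chosen or not. In the first case, the $s$ chosen vectors are $(1,x_i,\ldots,x_i^{s-1})$ for distinct $x_1,\ldots,x_s \in \F_q$, and they form the rows of the $s \times s$ Vandermonde matrix with determinant $\prod_{i<j}(x_j-x_i) \neq 0$. In the second case, $s-1$ vectors come from the parametrized family with distinct parameters $x_1,\ldots,x_{s-1}$, together with $v$; expanding the corresponding $s \times s$ determinant along the bottom row (which is $(0,\ldots,0,1)$) reduces the computation to the $(s-1)\times(s-1)$ Vandermonde determinant $\prod_{i<j}(x_j-x_i)$, which is again nonzero.

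I do not expect any genuine obstacle here: the whole argument is essentially the observation that $A$ is the standard ``extended Reed--Solomon'' / normal rational curve configuration in $\F_q^s$, whose full-spark property is the classical Vandermonde computation. The only mildly delicate point is the bookkeeping for the edge case $s=1$, where the two vectors in $A$ collapse to $\{(1),(1)\}$ as a multiset (which is why the statement is phrased in terms of multisets), and any single element is trivially linearly independent, so the claim still holds.
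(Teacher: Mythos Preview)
Your proposal is correct and follows essentially the same approach as the paper: you split into the two cases depending on whether $(0,\ldots,0,1)$ is among the chosen $s$ vectors and verify invertibility via the Vandermonde determinant (the paper displays exactly these two matrix shapes and declares them invertible). Your treatment is slightly more explicit---you actually write down $\prod_{i<j}(x_j-x_i)$ and explain the Laplace expansion along the last row---and you also flag the $s=1$ multiset subtlety, which the paper handles only by the remark preceding the lemma.
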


\begin{proof}
It's clear that $|A|=q+1$. For the proof of $A$ being $s$-generating see the beginning of Chapter 11 \S 5 of \cite{ms}. This can also be seen by noticing that the matrices obtained from any $s$ distinct elements of $A$ all have one of the following forms (up to exchanging rows):
\[\left(\begin{array}{cccc}
1&x_1&\cdots&x_1^{s-1}\\
\vdots&\vdots&&\vdots\\
1&x_s&\cdots&x_s^{s-1}
\end{array}\right)\hspace{24pt}\mbox{or}\hspace{24pt}\left(\begin{array}{ccccc}
1&x_1&\cdots&x_1^{s-2}&x_1^{s-1}\\
\vdots&\vdots&&\vdots\\
1&x_{s-1}&\cdots&x_{s-1}^{s-2}&x_{s-1}^{s-1}\\
0&0&\cdots&0&1
\end{array}\right)\]
with distinct $x_j$ and so such matrices are invertible.
\end{proof}

In order to construct one of the two sampling sets needed in the algorithms, we still need a stable sampling set for 1-sparse Fourier signals on $\F_p$. 
%
%
We will show in the next lemma how such a set can be found. In the same lemma we will also give an algorithm which, applied to the function $f(x)=\sum_{y=0}^{p-1} a_y\omega_p^{yx}$, under certain assumptions on $f$, returns $y'$ with $|a_{y'}|$ maximal. In order to state the theorem we need the following definition.


\begin{defi}
For $d>0$ and $x\in\R$ let $|x|_d:=\dist(x,d\Z)$ be the minimum distance of $x$ from an integer multiple of $d$.
\end{defi}

\begin{lemma}\label{t11}
Let $K:=\{0\}\cup\{2^i:0\leq i\leq k\mbox{ and }i\in\Z\}\subseteq\F_p$, where $k\in\Z$ is minimal such that $2^k\geq p/3$.  Let $f(x)=\sum_{y=0}^{p-1} a_y\chi_y(x)$. If there exists $y'$ such that $|a_{y'}|>2\sum_{y\not=y'}|a_y|$, then the following algorithm returns $y'$.

\begin{algorithmic}
\State Set $b(2^l):=\arg(f(2^l)/f(0))$, for $0\leq l\leq k$ (if $f(2^l)$ or $f(0)$ are zero set $b(2^l):=0$).

\State Set $e:=(0,\ldots,0)$ ($Length(e)=p$).

\For {$l$ from $0$ to $k$}

\For {$j$ from $0$ to $p-1$}

\If {$e(j+1)=0$ and $|p\,b(2^l)/(2\pi)-2^lj|_p\geq p/6$}

\State $e(j+1):=1$

\EndIf

\EndFor

\EndFor

\noindent\Return the index $j$ such that $e(j+1)=0$. If such a $j$ does not exist, return 0.
\end{algorithmic}
\end{lemma}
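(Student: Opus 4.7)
The plan is to establish two complementary claims about the algorithm: (i) the index $y'$ is never marked, i.e., $|p\,b(2^l)/(2\pi)-2^l y'|_p < p/6$ for every $l \in \{0,\ldots,k\}$; and (ii) every other index $j \in \{0,\ldots,p-1\}\setminus\{y'\}$ is marked at some iteration $l$. Together these guarantee that $y'$ is the unique index with $e(y'+1)=0$ when the algorithm terminates. Before anything else, I would observe that the hypothesis $|a_{y'}| > 2\sum_{y\ne y'}|a_y|$ forces $|f(x)| \ge |a_{y'}| - \sum_{y\ne y'}|a_y| > |a_{y'}|/2 > 0$ for every $x\in\F_p$, so $f(0)$ and every $f(2^l)$ are nonzero and the zero-default branch of the algorithm is never triggered.

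For (i), I would factor out the $y'$-contribution as
\[\frac{f(2^l)}{f(0)} = \omega_p^{y'\cdot 2^l}\cdot\frac{1+u_l}{1+v},\]
with $u_l := \omega_p^{-y'\cdot 2^l}\sum_{y\ne y'} a_y\omega_p^{y\cdot 2^l}/a_{y'}$ and $v := \sum_{y\ne y'}a_y/a_{y'}$. The hypothesis yields $|u_l|, |v| < 1/2$. Any point $1+w$ with $|w|<1/2$ lies in the open disk of radius $1/2$ about $1$, which is tangent to the rays of angle $\pm\arcsin(1/2)=\pm\pi/6$ through the origin; hence $|\arg(1+w)| < \pi/6$ and therefore $|\arg((1+u_l)/(1+v))| < \pi/3$. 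Multiplying through by $p/(2\pi)$ translates this angle bound into $|p\,b(2^l)/(2\pi) - 2^l y'|_p < p/6$, which is exactly (i).

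The main obstacle is (ii), which reduces to the following combinatorial doubling lemma: for every nonzero $d \in \F_p$ there exists $l \in \{0,\ldots,k\}$ with $|2^l d|_p \ge p/3$. I would prove this by contradiction. Choose the integer representative $x_l \in (-p/2, p/2]$ of $2^l d$ modulo $p$, so $|x_l| = |2^l d|_p$. Assuming $|x_l| < p/3$ for every $l \le k$, I would show by induction that $x_{l+1} = 2x_l$ as integers (no wrap-around occurs): wrap-around would require $|x_l| > p/4$, and combined with $|x_l| < p/3$ this yields $|x_{l+1}| = p - 2|x_l| \in (p/3, p/2)$, contradicting $|x_{l+1}| < p/3$. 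Hence $|x_k| = 2^k|d|_p < p/3$, so $|d|_p < p/(3\cdot 2^k) \le 1$ using $2^k \ge p/3$; as $|d|_p$ is a non-negative integer, this forces $d=0$, a contradiction.

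Applying the doubling lemma to $d := j-y' \ne 0$ for any $j \ne y'$, select $l\le k$ with $|2^l(j-y')|_p \ge p/3$. Then by (i) and the triangle inequality for $|\cdot|_p$,
\[|p\,b(2^l)/(2\pi) - 2^l j|_p \ge |2^l(j-y')|_p - |p\,b(2^l)/(2\pi) - 2^l y'|_p > \tfrac{p}{3} - \tfrac{p}{6} = \tfrac{p}{6},\]
so $j$ is marked at iteration $l$. This establishes (ii), and combined with (i) it shows that $y'$ is the unique index returned.
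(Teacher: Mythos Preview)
Your proof is correct and follows essentially the same approach as the paper's: both establish that $y'$ is never marked via the bound $|\arg(1+w)|<\pi/6$ for $|w|<1/2$, and both reduce the marking of every other index to the doubling fact that any nonzero residue $d$ satisfies $|2^l d|_p\ge p/3$ for some $l\le k$. Your proof of the doubling fact (tracking signed representatives in $(-p/2,p/2]$ and showing no wrap-around) is a minor rephrasing of the paper's argument (reducing to positive $j$ and finding the first $l$ with $2^l j\ge p/3$); the underlying idea is identical.
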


\begin{proof}
If no $y'$ exists then there is nothing to prove, so we will now assume that there exists $y'$ with $|a_{y'}|>2\sum_{y\not=y'}|a_y|$. Then, for $x\in\F_p$, we have that
\[\left|\sum_{y\not=y'} a_y\chi_y(x)\right|=\left|\sum_{y\not=y'} a_y\omega_p^{xy}\right|\leq \sum_{y\not=y'}\left| a_y\omega_p^{xy}\right|=\sum_{y\not=y'}\left| a_y\right|<|a_{y'}|/2\]
and so
\[|f(x)|=\left|\sum_{y=0}^{p-1} a_y\chi_y(x)\right|=\left|\sum_{y=0}^{p-1} a_y\omega_p^{xy}\right|\geq \left|a_{y'}\omega_p^{xy'}\right|-\left|\sum_{y\not=y'} a_y\omega_p^{xy}\right|>|a_{y'}|/2\geq 0.\]
So $f(x)\not= 0$ for every $x\in\F_p$. In particular $b(2^i)=\arg(f(2^l)/f(0))$.

For $z\in\C$ with $|z|<1/2$ we have that $|\arg(1+z)|_{2\pi}<\pi/6$. It then follows that
\[|\arg(f(x))-\arg(a_{y'}\omega_p^{xy'})|_{2\pi}=|\arg(f(x)/(a_{y'}\omega_p^{xy'}))|_{2\pi}< \pi/6\]
and so
\begin{align*}
|\arg(f(x)/f(0))-\arg(\omega_p^{xy'})|_{2\pi}&=|\arg(f(x))-\arg(f(0))\\
&\hspace{24pt}-\arg(a_{y'}\omega_p^{xy'})+\arg(a_{y'})|_{2\pi}\\
&\leq|\arg(f(x))-\arg(a_{y'}\omega_p^{xy'})|_{2\pi}\\
&\hspace{24pt}+|\arg(f(0))-\arg(a_{y'})|_{2\pi}\\
&< \pi/3
\end{align*}
for every $x\in\F_p$. Then
\begin{align*}
|p\,b(2^l)/(2\pi)-2^ly'|_p&=|p/(2\pi)(\arg(f(2^l)/f(0))-\arg(\omega_p^{2^ly'}))|_p\\
&=p/(2\pi) |\arg(f(2^l)/f(0))-\arg(\omega_p^{2^ly'})|_{2\pi}\\
&< p/6
\end{align*}
for $0\leq l\leq k$, in particular $e(y'+1)=0$.

We will now show that if $y\not= y'$ then $e(y+1)=1$, which will then prove the lemma. In order to do this we will show that if $j\not=0$ then there exists $l$ such that $0\leq l\leq k$ and $|2^lj|_p\geq p/3$, which also proves that if $y\not= y'$ then there exists $l$, $0\leq l\leq k$, with $|2^l(y'-y)|_p\geq p/3$. Hence, for the same $l$ we have that
\[|p\,b(2^l)/(2\pi)-2^ly|_p\geq|2^l(y'-y)|_p-|p\,b(2^l)/(2\pi)-2^ly'|_p\geq p/3-p/6=p/6.\]

Assume that $j\not\equiv 0 \mod p$ and that $|2^lj|_p<p/3$ for all $0\leq l\leq k$. Then, up to a multiple of $p$, we also have that $j\in\pm\{1,\ldots,\lceil{p/3}\rceil-1\}$ (by considering the case $l=0$). As $|2^lj|_p=|2^l(-j)|_p$ we can assume that $j\in\{1,\ldots,\lceil{p/3}\rceil-1\}$. Let $l$ be minimal such that $2^lj\geq p/3$. As $1\leq j<p/3$ we have that $1\leq l\leq k$ by definition of $k$. As $1\leq 2^{(l-1)}j<p/3$ it follows that $p/3\leq 2^lj<2p/3$ and so $|2^lj|_p\geq p/3$. Since $1\leq l\leq k$ this gives a contradiction and so the lemma is proved.
\end{proof}

We will now construct the sampling sets which will be used in the algorithms presented in the next section.

\begin{defi}\label{d1}
Let $m\geq 1$ and $n:=4t(m-1)+1$ and let $H_1,\ldots,H_n\leq \F_p^r$ be $m$-generating and all of dimension $h$ with $1\leq h\leq r$. Also let $K\subseteq\F_p$ be as in Lemma \ref{t11}. For $1\leq i\leq n$ choose $x_{1,i},\ldots,x_{r-h,i}\in\F_p^r$ such that $\Span_{\F_p}\{ H_i,x_{1,i},\ldots,x_{r-h,i}\}=\F_p^r$. The wanted sets are
\begin{align*}
\Gamma_1:=&\cup_i H_i,\\
\Gamma_2:=&\cup_i (H_i+\{kx_{i,j}:k\in K,\,\,1\leq j\leq r-h\}),
\end{align*}
where for sets $A$ and $B$ we define $A+B:=\{a+b:a\in A,\, b\in B\}$.
\end{defi}


\begin{rem}\label{r1}
From Lemmas \ref{l1} and \ref{t51}, $m$-generating $H_1,\ldots,H_n$ exist if $m=\lceil r/h\rceil$ and $4t(\lceil r/h\rceil-1)\leq p^h$. Also $x_{1,i},\ldots,x_{r-h,i}$ exists for $1\leq i\leq n$ as $H_i$ is of dimension $h$.

It can be easily checked from the definitions that
\begin{align*}
|\Gamma_1|&\leq np^h,\\
|\Gamma_2|&\leq np^h|K|(r-h).
\end{align*}
\end{rem}

We will now bound $np^h$ and $|K|$.

\begin{theor}\label{c20}
Let $h\in\Z$ be minimal such that $h\geq 1$ and $4t(\lceil r/h\rceil-1)\leq p^h$ and let $K$ as in Lemma \ref{t11}. Also let $n:=4t(\lceil r/h\rceil-1)+1$. Then $np^h<16pt^2r^2$ and $|K|\leq 2+\log_2(p)$.
\end{theor}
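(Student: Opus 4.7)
The plan is to handle the two bounds separately. The estimate on $|K|$ is a direct computation from the defining minimality of $k$; the bound on $np^h$ splits into the cases $h=1$ and $h\geq 2$, where the second is the substantive one and leverages the minimality of $h$.

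For $|K|$: by construction $|K|\leq k+2$, with strict inequality possible if some of the $2^i$ coincide modulo $p$. If $k\geq 1$, then minimality forces $2^{k-1}<p/3$, so $k-1<\log_2(p/3)=\log_2(p)-\log_2(3)<\log_2(p)-1$, giving $|K|\leq k+2<2+\log_2(p)$. The case $k=0$ is trivial.

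For $np^h$: when $h=1$ we have $n=4t(r-1)+1\leq 4tr+1$, and $4tr+1<16t^2r^2$ for all integers $t,r\geq 1$ (equivalent to $4tr(4tr-1)>1$, clear since $tr\geq 1$), so $np<16pt^2r^2$. For $h\geq 2$, first observe that $h\leq r$ automatically, since $h=r$ satisfies $4t(\lceil r/r\rceil-1)=0\leq p^r$. Then the minimality of $h$ means that $h-1$ fails the defining inequality, so
\[p^{h-1}<4t(\lceil r/(h-1)\rceil-1)\leq \frac{4tr}{h-1},\]
using $\lceil x\rceil-1<x$. Combined with $n\leq 4tr/h+1$ (from $\lceil r/h\rceil-1\leq r/h$), this gives
\[np^h<p\Bigl(\frac{4tr}{h}+1\Bigr)\frac{4tr}{h-1}=p\Bigl(\frac{16t^2r^2}{h(h-1)}+\frac{4tr}{h-1}\Bigr)\leq p(8t^2r^2+4tr),\]
where the last step uses $h(h-1)\geq 2$ and $h-1\geq 1$. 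The final quantity is at most $16pt^2r^2$ iff $4tr\leq 8t^2r^2$, i.e.\ $1\leq 2tr$, which holds for $t,r\geq 1$.

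I expect no serious obstacle: the argument boils down to applying the minimality of $h$ in the right direction together with elementary ceiling manipulations. The one spot that requires care is verifying $h\leq r$, so that $r/(h-1)>1$ and $\lceil r/(h-1)\rceil-1\geq 1$ when $h\geq 2$; this ensures the strict inequality produced by minimality of $h$ is genuinely useful.
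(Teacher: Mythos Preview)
Your proof is correct and follows essentially the same approach as the paper's: split the $np^h$ bound into the cases $h=1$ and $h\ge 2$, use minimality of $h$ to control $p^{h-1}$ in the latter, and handle $|K|$ via the minimality of $k$. The only differences are cosmetic---the paper uses the slightly coarser bound $p^{h-1}<4tr$ in place of your $p^{h-1}<4tr/(h-1)$, and in the $h=1$ case it simplifies $4t(r-1)+1\le 4tr$ rather than $4tr+1$---but the structure and all key steps match.
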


\begin{proof}
Notice that $h\leq r$, since $4t(\lceil r/r\rceil-1)=0\leq p^r$

If $h=1$ we have that $np^h\leq (4t(r-1)+1)p\leq 4ptr<16pt^2r^2$.

If $h>1$ then $p^{h-1}< 4t(\lceil r/(h-1)\rceil -1)<4tr$ and so
\[np^h\leq pp^{h-1}\left(4t\left(\left\lceil\frac{r}{h}\right\rceil-1\right)+1\right)<p\cdot4tr\cdot\left(\frac{4tr}{h}+1\right)\leq16pt^2r^2.\]

When looking at $|K|$, we have that $|K|=2+k$, where $k\in\Z$ is minimal such that $2^k\geq p/3$. So $2^k<2p/3$ and then $k<\log_2(2p/3)<\log_2(p)$.
\end{proof}

From Remark \ref{r1} and Theorem \ref{c20} we obtain bounds on $|\Gamma_1|$ and $|\Gamma_2|$.

\begin{rem}\label{r2}
From Remark \ref{r1} and Theorem \ref{c20} we obtain that $\Gamma_1$ and $\Gamma_2$ can be chosen with
\begin{align*}
|\Gamma_1|&\leq 16pt^2r^2,\\
|\Gamma_2|&\leq 16pt^2r^3(2+\log_2(p)).
\end{align*}
\end{rem}

We still need to prove that the sampling sets constructed in Definition \ref{d1} are actually sampling sets. This will be done in the next section by proving that the reconstruction algorithms which will be presented work. Since the reconstruction algorithms need values $f(x)$ with $x\in\Gamma_1$ or $x\in\Gamma_2$ respectively, this will also prove that the given sets are sampling sets. An alternative way would be to prove that $\Gamma_1$ and $\Gamma_2$ satisfy the RIP property (see Section \ref{s3} for remarks about it). 

\section{Preliminaries to the reconstruction algorithms}\label{s5}

We will now prove a lemma which will play a crucial role in the two reconstruction algorithms we will present in Section \ref{s4}. For any $H\leq \F_p^r$ and $L\leq \widehat{\F_p^r}$ let $H^\perp:=\{\chi_y\in\widehat{\F_p^r}:\chi_y(h)=1\,\,\,\,\forall h\in H\}$ and $L^\perp:=\{x\in\F_p^r:\chi_y(x)=1\,\,\,\,\forall \chi_y\in L\}$ be the annihilators of $H$ and $L$ respectively.

\begin{lemma}\label{l8}
Let $H_1,\ldots,H_n$ be $m$-generating with $n=4t(m-1)+1$, $f=\sum_{\chi_y\in\widehat{\F_p^r}}a_{\chi_y}\chi_y$ be a $t$-sparse Fourier function. Write $\widehat{f}=\widehat{g}+\epsilon$ with the support of $\widehat{g}$ having at most $t$ elements. Then for every $\chi_z\in\widehat{\F_p^r}$ we have that
\[\left|\left\{1\leq i\leq n:\sum_{\chi_y\in\chi_z H_i^\perp\setminus\{\chi_z\}}|a_{\chi_y}|\leq\frac{\|\epsilon\|_1}{t}\right\}\right|\geq 2t(m -1)+1.\]
\end{lemma}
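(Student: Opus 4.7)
The plan is to bound the number of \emph{bad} indices — those for which the sum exceeds $\|\epsilon\|_1/t$ — by $2t(m-1)$, which immediately gives at least $n-2t(m-1)=2t(m-1)+1$ good indices. The key observation will be a dual reformulation of the $m$-generating hypothesis.

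First I would take annihilators of $\Span_{\F_p}\{H_{i_1},\ldots,H_{i_m}\}=\F_p^r$ to conclude that $H_{i_1}^\perp\cap\cdots\cap H_{i_m}^\perp=\{1\}$ for every $m$-subset of indices. Equivalently, any non-trivial character lies in at most $m-1$ of the subgroups $H_i^\perp$. Translating by $\chi_z$, this says that every $\chi_y\neq\chi_z$ lies in at most $m-1$ of the cosets $\chi_z H_i^\perp$.

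Next I would split the bad indices into two classes. Set $S:=\supp(\widehat{g})$, so $|S|\leq t$. Call $i$ \emph{spoiled} if $(\chi_z H_i^\perp\setminus\{\chi_z\})\cap S\neq\emptyset$. By the previous paragraph each element of $S\setminus\{\chi_z\}$ lies in at most $m-1$ such cosets, so there are at most $t(m-1)$ spoiled indices. For non-spoiled $i$, every $\chi_y\in\chi_z H_i^\perp\setminus\{\chi_z\}$ satisfies $\widehat{g}(\chi_y)=0$, hence $|a_{\chi_y}|=|\epsilon(\chi_y)|$; call such an $i$ \emph{heavy} if
\[\sum_{\chi_y\in\chi_z H_i^\perp\setminus\{\chi_z\}}|\epsilon(\chi_y)|>\frac{\|\epsilon\|_1}{t}.\]
Summing this inequality over heavy indices and swapping the order of summation, the resulting double sum equals $\sum_{\chi_y\neq\chi_z}|\epsilon(\chi_y)|\cdot|\{\text{heavy }i:\chi_y\in\chi_z H_i^\perp\}|\leq(m-1)\|\epsilon\|_1$, while it is also strictly greater than $|\{\text{heavy }i\}|\cdot\|\epsilon\|_1/t$. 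This forces at most $t(m-1)$ heavy indices (and when $\|\epsilon\|_1=0$ there are none, since no sum of $|\epsilon|$ values can exceed $0$).

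Adding the two contributions yields at most $2t(m-1)$ bad indices, as desired. The only genuinely non-routine step is the dualization at the start — recognizing that the $m$-generating hypothesis on the $H_i$ is equivalent to a ``low multiplicity'' statement for non-trivial characters in the annihilators $H_i^\perp$; once that is in hand, the argument reduces to a clean double count combined with the observation that $\widehat{g}$ vanishes outside $S$.
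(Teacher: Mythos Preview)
Your argument is correct. Both your proof and the paper's hinge on the same dual observation --- that the $m$-generating hypothesis on the $H_i$ translates into ``any non-trivial character lies in at most $m-1$ of the annihilators $H_i^\perp$'' --- and the paper derives exactly this inequality as the coherence bound $\sum_a M_{a,\chi_z}M_{a,\chi_w}\le m-1$. The routes diverge after that: the paper packages this bound as an $(n,m-1)$-coherence property of an incidence matrix and then invokes an external result (Lemma~2 of \cite{b1}) as a black box, handling the degenerate cases $\epsilon=0$ and $h=r$ by separate ad~hoc arguments; you instead give a direct, self-contained double count via the spoiled/heavy split, with the $\epsilon=0$ case absorbed naturally. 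Your presentation is more elementary and avoids the external dependence, while the paper's framing makes the connection to the coherent-matrix machinery of \cite{b1} explicit; substantively they are the same argument, yours just unpacks what Lemma~2 of \cite{b1} is doing.
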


\begin{proof}
First we will show that the matrix $M$ is $(n,m-1)$-coherent, and then we will apply (a variant of) Lemma 2 of \cite{b1} to conclude the proof of the lemma.

Let $M$ be the matrix with rows labeled by
\[A:=\{\chi_y H^\perp_i:1\leq i\leq n,\,\,\chi_y\in\Rep(\widehat{\F_p^r}/H^\perp_i)\},\]
columns labeled by elements of $\widehat{\F_p^r}$ and $M_{\chi_y H^\perp_i,\chi_z}=1$ if $\chi_z\in\chi_y H^\perp_i$ or $M_{\chi_y H^\perp_i,\chi_z}=0$ otherwise. Then each column of $M$ contains exactly $n$ entries equal to 1 and
\[\sum_{a\in A}M_{a,\chi_z}M_{a,\chi_w}=\sum_{i=1}^nM_{\chi_z H^\perp_i,\chi_w}=|\{1\leq i\leq n:(\chi_z)^{-1}\chi_w\in H^\perp_i\}|\]
for $\chi_z,\chi_w\in\widehat{\F_p^r}$. Let $\langle\chi_x\rangle\subseteq\widehat{\F_p^r}$ be the subgroup generated by $\chi_x$. Since the $H_i$ are $m$-generating, so that no more than $m-1$ of them can be contained in a fixed proper subspace of $\F_p^r$, and since $\langle\chi_x\rangle^\perp\lneq \F_p^r$ for $1\not=\chi_x\in \widehat{\F_p^r}$ (that is $x\not=0\in\F_p^r$), we have that
\begin{align*}
\sum_{a\in A}M_{a,\chi_z}M_{a,\chi_w}&=|\{1\leq i\leq n:(\chi_z)^{-1}\chi_w\in H^\perp_i\}|\\
&=|\{1\leq i\leq n:\chi_{w-z}\in H^\perp_i\}|\\
&=|\{1\leq i\leq n:\langle\chi_{w-z}\rangle\leq H^\perp_i\}|\\
&=|\{1\leq i\leq n:H_i\leq\langle\chi_{w-z}\rangle^\perp\}|\\
&\leq m-1
\end{align*}
for $\chi_z\not=\chi_w$ (that is $x\not=w$ as elements of $\F_p^r$).

Assume first that $\epsilon\not=0$. If $h<r$ the lemma then follows from Lemma 2 of \cite{b1} with (using the notation of Lemma 2 of \cite{b1}) $k=t$ (we have that $t<n$ by assumption), $\epsilon'=1$ and $c=4$. If $h=r$ then $H_i^\perp=\{1\}$ for every $1\leq i\leq n$ and so
\[\sum_{\chi_y\in\chi_z H_i^\perp\setminus\{\chi_z\}}|a_{\chi_y}|=0\leq\frac{\|\epsilon\|_1}{t} \]
for every $\chi_z\in\widehat{\F_p^r}$ and $1\leq i\leq n$. In particular the lemma holds also in this case.

Assume now that $\epsilon=0$. Then
\[f=\sum_{j=1}^ta_{\chi_{y_j}}\chi_{y_j}\]
for some $\chi_{y_j}\in\widehat{F_p^r}$. In particular
\[\sum_{\chi_y\in\chi_z H_i^\perp\setminus\{\chi_z\}}|a_{\chi_y}|=\sum_{{1\leq j\leq t:\chi_{y_j}\not=\chi_z,}\atop{\chi_{y_j}\in\chi_z H_i^\perp}}|a_{\chi_{y_j}}|.\]
Since $\chi_{y_j}\in\chi_z H_i^\perp$ if and only if $(\chi_z)^{-1}\chi_{y_j}\in H_i^\perp$ and, for each $\chi_{y_j}\not=\chi_z$, there exists at most $m-1$ such $i$, we have that
\[\sum_{\chi_y\in\chi_z H_i^\perp\setminus\{\chi_z\}}|a_{\chi_y}|=0\]
for at least $n-t(m-1)\geq 2t(m-1)+1$ distinct values of $i$. In particular the lemma holds also in this case.
\end{proof}

Let $H$ be any subgroup of $\F_p^r$. Then $\widehat{\F_p^r}/H^\perp\cong\widehat{H}$ through $\chi_y H^\perp\mapsto(h\mapsto\chi_y(h))$. For any function $f=\sum_{\chi_y\in\widehat{\F_p^r}}a_{\chi_y} \chi_y$ on $\F_p^r$ we have that
\begin{equation}\label{eq1}
\widehat{f|_H}(\chi_y H^\perp)=\sum_{\chi_z\in\chi_y H^\perp}a_{\chi_z}=\sum_{\chi_z\in\chi_y H^\perp}\widehat{f}(\chi_z)
\end{equation}
since
\[f|_H=\sum_{\chi_y\in\widehat{\F_p^r}}a_{\chi_y} \chi_y|_H=\sum_{\chi_y H^\perp\in \widehat{\F_p^r}/H^\perp}\chi_y|_H\sum_{\chi_z\in\chi_y H^\perp}a_{\chi_z}.\]
This identification will be used in the proof of the algorithms which we will present in the next section.

 \section{Reconstruction algorithms}\label{s4}
  
We are now ready to present the reconstruction algorithms. Through all of this section let $f$, $g$ and $\epsilon$ be as in Section \ref{s1} and $\Gamma_j$, $H_1,\dots,H_n$, $K$, $m$, $n$, $h$ and $x_{l,i}$ as in Definition \ref{d1}. The first algorithm we present reconstructs or approximates $\widehat{f}$ from $f|_{\Gamma_1}$.

\begin{theor}\label{t2}
For $1\leq j\leq n$ let $F_j$ be the Fourier transform for $H_j$ and define $c_j:=F_j(f|_{H_j})$. Then the following algorithm returns $\widehat{f}'$ with $|\supp(\widehat{f}')|\leq t$ and $\|\widehat{f}-\widehat{f}'\|_1\leq (1+3\sqrt{2})\|\epsilon\|_1$.

\vspace{12pt}
\begin{algorithmic}
\State Set $\widehat{f}':=(0,\ldots,0)$, $\widehat{f}'':=(0,\ldots,0)$ and $Y:=(0,\ldots,0)$

\For {$j$ from $1$ to $n$}


\For {$2t-1$ values of $\chi_y H_j^\perp$ such that $|c_j(\chi_y H_j^\perp)|$ is largest}

\For {$\chi_z\in\chi_y H_j^\perp$}

\State $Y(\chi_z):=Y(\chi_z)+1$

\EndFor

\EndFor


\EndFor

\For {$\chi_y\in\widehat{\F_p^r}$}

\If {$Y(\chi_y)>2t(m -1)$}

\State $X:=()$

\For {$j$ from $1$ to $n$}

\State Append $c_j(\chi_y H_j^\perp)$ to $X$

\EndFor

\State $\widehat{f}''(\chi_y):=\Median(\{\Re(x):x\in X\})+i\Median(\{\Im(x):x\in X\})$

\EndIf

\EndFor

\For {$t$ values of $\chi_y$ for which $|\widehat{f}''(\chi_y)|$ is largest}

\State $\widehat{f}'(\chi_y):=\widehat{f}''(\chi_y)$

\EndFor

\noindent \Return $\widehat{f}'$.
\end{algorithmic}
\end{theor}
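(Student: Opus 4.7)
My plan is to analyze the three stages of the algorithm (coset ranking, median estimation, top-$t$ selection) separately and combine their contributions to the error. The identification~\eqref{eq1} reads $c_j(\chi_y H_j^\perp)=\sum_{\chi_z\in\chi_y H_j^\perp}\widehat{f}(\chi_z)$, so by Lemma~\ref{l8} applied at an arbitrary $\chi_y$, at least $2t(m-1)+1$ of the indices $j$ (a strict majority of $n=4t(m-1)+1$) satisfy $|c_j(\chi_y H_j^\perp)-\widehat{f}(\chi_y)|\le\|\epsilon\|_1/t$. Taking the median of real and imaginary parts separately then produces
\[
\widehat{h}(\chi_y):=\Median\{\Re(c_j(\chi_y H_j^\perp))\}+i\,\Median\{\Im(c_j(\chi_y H_j^\perp))\}
\]
with $|\widehat{h}(\chi_y)-\widehat{f}(\chi_y)|\le\sqrt{2}\,\|\epsilon\|_1/t$; this is the value assigned to $\widehat{f}''(\chi_y)$ whenever $Y(\chi_y)>2t(m-1)$.

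The core step is to show that $Y$ exceeds $2t(m-1)$ at every coefficient that matters. Since the claimed bound depends only on $\|\epsilon\|_1$, I may take the optimal decomposition $\widehat{g}=\widehat{f}|_T$ with $T$ the indices of the $t$ largest values of $|\widehat{f}|$. Set $\delta:=\|\epsilon\|_1/t$ and claim that every $\chi_y\in T$ with $|\widehat{f}(\chi_y)|>2\delta$ satisfies $Y(\chi_y)>2t(m-1)$. For any good index $j$ from Lemma~\ref{l8} for $\chi_y$ one has $|c_j(\chi_y H_j^\perp)|>\delta$, while the cosets containing another element of $T$ number at most $t-1$, and the remaining ``light'' cosets receive $c_j$ only from $\epsilon$, with total $|c_j|$-mass $\le\|\epsilon\|_1=t\delta$ --- so fewer than $t$ of them can exceed $\delta$. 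Hence at most $2t-2$ cosets strictly dominate $\chi_y H_j^\perp$, meaning this coset lies in the top $2t-1$ and each such $j$ increments $Y(\chi_y)$; summing over the $\ge 2t(m-1)+1$ good indices yields the claim.

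Let $T':=\supp(\widehat{f}')$ and decompose
\[
\|\widehat{f}-\widehat{f}'\|_1=\sum_{\chi_y\in T'}|\widehat{f}(\chi_y)-\widehat{h}(\chi_y)|+\sum_{\chi_y\in T\setminus T'}|\widehat{f}(\chi_y)|+\sum_{\chi_y\notin T\cup T'}|\epsilon(\chi_y)|.
\]
The first sum is at most $\sqrt{2}\|\epsilon\|_1$ by the median bound. For the middle sum, each $\chi_y\in T\setminus T'$ either has $|\widehat{f}(\chi_y)|>2\delta$ --- in which case Step~2 gives $\widehat{f}''(\chi_y)=\widehat{h}(\chi_y)$, and pairing with some $\chi_z\in T'\setminus T$ via a bijection yields $|\widehat{f}(\chi_y)|\le|\widehat{h}(\chi_y)|+\sqrt{2}\delta\le|\widehat{h}(\chi_z)|+\sqrt{2}\delta\le|\epsilon(\chi_z)|+2\sqrt{2}\delta$ since $\chi_z\notin T$ --- or else the trivial bound $|\widehat{f}(\chi_y)|\le 2\delta\le 2\sqrt{2}\delta$ suffices; summing either way gives at most $\|\epsilon|_{T'\setminus T}\|_1+2\sqrt{2}\|\epsilon\|_1$. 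Adding the third sum and using that $\epsilon$ is supported on $T^c$ together with $(T'\setminus T)\cup(T\cup T')^c=T^c$ caps the pure-$\epsilon$ pieces by $\|\epsilon\|_1$, yielding the final total $(1+3\sqrt{2})\|\epsilon\|_1$. The main obstacle is the counting argument in Step~2: the buffer $2t-1$ in the coset ranking and the strict majority $n=4t(m-1)+1$ of good indices are precisely calibrated so the counting closes, and some care is needed to handle edge cases such as ties in the top-$2t-1$ selection or the degenerate case where $|T'|<t$ because too few $Y$-values cross the threshold.
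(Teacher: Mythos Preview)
Your proof is correct and follows essentially the same route as the paper's: the identity $c_j(\chi_y H_j^\perp)=\sum_{\chi_z\in\chi_y H_j^\perp}\widehat f(\chi_z)$ together with Lemma~\ref{l8} gives the median accuracy $\sqrt{2}\,\|\epsilon\|_1/t$, the ``at most $2t-1$ cosets exceed $\delta$'' count (your heavy/light split is exactly the paper's Equation~\eqref{eq2}) forces $Y(\chi_y)>2t(m-1)$ whenever $|\widehat f(\chi_y)|>2\delta$, and the final decomposition over $T'$, $T\setminus T'$, and the complement matches the paper's bookkeeping. The only cosmetic difference is that you phrase the $T\setminus T'$ estimate via an explicit bijection into $T'\setminus T$, whereas the paper writes it as a sum inequality; the content is identical, and your observation that the counting argument also disposes of tie-breaking in the top-$(2t-1)$ selection (since at most $2t-1$ cosets in total have $|c_j|>\delta$) is a point the paper leaves implicit.
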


In the algorithm $\widehat{f}'$, $\widehat{f}''$ and $Y$ are labeled by elements of $\widehat{\F_p^r}$. The Fourier transform for $H_j$ can be defined similarly to that of $\F_p^r$ (notice that $H_j\cong\F_p^h$ by definition). Under the identification at the end of Section \ref{s5} giving $\widehat{H_j}\cong\widehat{F_p^r}/H^\perp$ we can also define $F_j$ as a function $F_j:\widehat{F_p^r}/H^\perp\rightarrow \C$.

\begin{proof}

We will first show that if $|a_{\chi_z}|> 2\|\epsilon\|_1/t$ then $Y(\chi_z)> 2t(m-1)$ and next prove that if $Y(\chi_w)> 2t(m-1)$ then $|\widehat{f}''(\chi_w)-a_{\chi_w}|\leq\sqrt{2}\|\epsilon\|_1/t$. Using these results we will then prove the theorem.

By definition of $\epsilon$, for $1\leq j\leq n$ we have that
\begin{equation}\label{eq2}
|\{\chi_y H_j^\perp\in\widehat{\F_p^r}/H_j^\perp:|c_j(\chi_y H_j^\perp)|>\|\epsilon\|_1/t\}|\leq 2t-1,
\end{equation}
since, if $g=\sum_{l=1}^t a_{\chi_{y_l}}\chi_{y_l}$ (where some of the coefficients might be 0) and $\{\chi_{y_l} H_j^\perp\}:=\{\chi_{y_1} H_j^\perp,\ldots,\chi_{y_t} H_j^\perp\}$, then, from Equation \eqref{eq1},
\begin{align*}
\sum_{\chi_y H_j^\perp\in(\widehat{\F_p^r}/H^\perp)\setminus\{\chi_{y_l} H_j^\perp\}}|c_j(\chi_y H_j^\perp)|&=\sum_{\chi_y H_j^\perp\in(\widehat{\F_p^r}/H_j^\perp)\setminus\{\chi_{y_l} H_j^\perp\}}\left|\sum_{\chi_b\in\chi_y H_j^\perp} a_{\chi_b}\right|\\
&\leq\sum_{\chi_y H_j^\perp\in(\widehat{\F_p^r}/H_j^\perp)\setminus\{\chi_{y_l} H_j^\perp\}}\sum_{\chi_b\in\chi_y H_j^\perp} |a_{\chi_b}|\\
&\leq\sum_{\chi_y\in\widehat{\F_p^r}\setminus\{\chi_{y_1},\ldots,\chi_{y_t}\}}|a_{\chi_y}|\\
&\leq \|\epsilon\|_1
\end{align*}
and then in particular there are less than $t$ elements $\chi_y H_j^\perp\in(\widehat{\F_p^r}/H_j^\perp)\setminus\{\chi_{y_l} H_j^\perp\}$ such that $|c_j(\chi_y H_j^\perp)|>\|\epsilon\|_1/t$. In particular there are at most $2t-1$ elements $\chi_y H_j^\perp\in(\widehat{\F_p^r}/H_j^\perp)$ with $|c_j(\chi_y H_j^\perp)|>\|\epsilon\|_1/t$.

If $|a_{\chi_z}|> 2\|\epsilon\|_1/t$ then
\[|\{j:|c_j(\chi_z H^\perp)|>\|\epsilon\|_1/t\}|>2t(m-1)\]
by Lemma \ref{l8} and Equation \eqref{eq1} as then for at least $2t(m-1)$ values $j$ we have
\[|c_j(\chi_z H_j^\perp)|=\left|\sum_{\chi_y\in\chi_z H_j^\perp}a_{\chi_y}\right|\geq|a_{\chi_z}|-\left|\sum_{\chi_y\in\chi_z H_j^\perp\setminus\{\chi_z\}}a_{\chi_y}\right|>\|\epsilon\|_1/t.\]
So $Y(\chi_z)> 2t(m-1)$ in this case.

Again by Lemma \ref{l8} and Equation \eqref{eq1}, we have that whenever $Y(\chi_w)> 2t(m-1)$ then $|\widehat{f}''(\chi_w)-a_{\chi''}|\leq\sqrt{2}\|\epsilon\|_1/t$, since in this case $\widehat{f}''(\chi_w)=\Median(\{\Re(x):x\in X\})+i\Median(\{\Im(x):x\in X\})$ and
\begin{align*}
|\Median(\{\Re(x):x\in X\})-\Re(a_{\chi_w})|&\leq\|\epsilon\|_1/t,\\
|\Median(\{\Im(x):x\in X\})-\Im(a_{\chi_w})|&\leq\|\epsilon\|_1/t.
\end{align*}

We will now prove that $\|\widehat{f}-\widehat{f}'\|_1\leq (1+3\sqrt{2})\|\epsilon\|_1$, which will prove the theorem, since by definition $|\supp(\widehat{f}')|\leq t$. To do this let $T:=\{\chi_{y_1},\ldots,\chi_{y_t}\}$ and $T':=\supp(\widehat{f}')$. We can write $T\setminus T'=T_1\cup T_2$, where
\begin{align*}
T_1&=\{\chi_y\in T\setminus T':|a_{\chi_y}|\leq 2\|\epsilon\|_1/t\},\\
T_2&=\{\chi_y\in T\setminus T':|a_{\chi_y}|> 2\|\epsilon\|_1/t\}.
\end{align*}

Notice that $T',T_2\subseteq \supp(\widehat{f}'')$. For $\chi_b\in T_2$ and $\chi_c\in T'\setminus T$ we have
\[|a_{\chi_c}|+\sqrt{2}\|\epsilon\|_1/t\geq |\widehat{f}''(\chi_c)|\geq|\widehat{f}''(\chi_b)|\geq|a_{\chi_b}|-\sqrt{2}\|\epsilon\|_1/t\]
and so $|a_{\chi_b}|\leq |a_{\chi_c}|+2\sqrt{2}\|\epsilon\|_1/t$.

Also for $\chi_b\in T_2$ we have by the previous part that $\widehat{f}''(\chi_b)\not=0$. From the definition of $\widehat{f}'$ if $|T_2|>0$ then $|T'|=t$. In this last case, from $T_2\subseteq T\setminus T'$ and $|T|=|T'|$, it follows that $|T'\setminus T|\geq |T_2|$ (this last inequality holds also if $|T_2|=0$).

In particular
\begin{align*}
\sum_{\chi_b\in T\setminus T'}|a_{\chi_b}|&=\sum_{\chi_b\in T_1}|a_{\chi_b}|+\sum_{\chi_b\in T_2}|a_{\chi_b}|\\
&\leq |T_1|2\frac{\|\epsilon\|_1}{t}+|T_2|2\sqrt{2}\frac{\|\epsilon\|_1}{t}+\sum_{\chi_c\in T'\setminus T}|a_{\chi_c}|\\
&\leq 2\sqrt{2}\epsilon+\sum_{\chi_c\in T'\setminus T}|a_{\chi_c}|
\end{align*}
and then
\begin{align*}
\left\|\widehat{f}-\widehat{f}'\right\|_1\!\!=&\left\|\widehat{f}|_{T'}-\widehat{f}'|_{T'}\right\|_1\!\!+\!\left\|\widehat{f}|_{T\setminus T'}-\widehat{f}'|_{T\setminus T'}\right\|_1\!\!+\!\left\|\widehat{f}|_{\widehat{\F_p^r}\setminus (T\cup T')}-\widehat{f}'|_{\widehat{\F_p^r}\setminus (T\cup T')}\right\|_1\\
=&\left\|\widehat{f}|_{T'}-\widehat{f}''|_{T'}\right\|_1\!\!+\!\sum_{\chi_b\in T\setminus T'}|a_{\chi_b}|\!+\!\sum_{\chi_y\in \widehat{\F_p^r}\setminus(T\cup T')}|a_{\chi_y}|\\
\leq &|T'|\sqrt{2}\frac{\|\epsilon\|_1}{t}+2\sqrt{2}\|\epsilon\|_1+\sum_{\chi_c\in T'\setminus T}|a_{\chi_c}|+\!\sum_{\chi_y\in \widehat{\F_p^r}\setminus(T\cup T')}|a_{\chi_y}|\\
\leq &3\sqrt{2}\|\epsilon\|_1+\!\sum_{\chi_y\not\in T}|a_{\chi_y}|\\
\leq &(1+3\sqrt{2})\|\epsilon\|_1
\end{align*}
and so the theorem is proved.
\end{proof}

Before presenting the second algorithm we need the following lemma.

\begin{lemma}\label{l7}
Let $H$ be any subgroup of $\F_p^r$, $x\in \F_p^r$ and $f=\sum_{\chi_w\in\widehat{\F_p^r}} a_{\chi_w} \chi_w$. If $\overline{f}$ is a function of $H$ with $\overline{f}(y):=f(x+y)$, $y\in H$, then
\[\widehat{\overline{f}}(\chi_w H^\perp)=\sum_{\chi_z\in\chi_w H^\perp}a_{\chi_z}\chi_z(x).\]
\end{lemma}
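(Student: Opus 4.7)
The plan is to imitate the derivation of Equation \eqref{eq1} in Section \ref{s5}, inserting an extra factor $\chi_z(x)$ that comes from the translation. First I would expand $\overline{f}(y) = f(x+y)$ using the Fourier expansion of $f$ and the fact that each $\chi_z$ is a group homomorphism, so that $\chi_z(x+y) = \chi_z(x)\chi_z(y)$. This yields, for $y \in H$,
\[
\overline{f}(y) = \sum_{\chi_z \in \widehat{\F_p^r}} a_{\chi_z} \chi_z(x) \chi_z(y).
\]

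The key observation is that two characters $\chi_z, \chi_{z'}$ lying in the same coset $\chi_w H^\perp$ satisfy $\chi_z|_H = \chi_{z'}|_H$, because their ratio is in $H^\perp$ and so is trivial on $H$. I would therefore regroup the sum above according to cosets of $H^\perp$, obtaining
\[
\overline{f}(y) = \sum_{\chi_w H^\perp \in \widehat{\F_p^r}/H^\perp} \Bigl(\sum_{\chi_z \in \chi_w H^\perp} a_{\chi_z}\chi_z(x)\Bigr) \chi_w|_H(y).
\]
Under the identification $\widehat{\F_p^r}/H^\perp \cong \widehat{H}$ via $\chi_w H^\perp \mapsto \chi_w|_H$ recalled at the end of Section \ref{s5}, this is the Fourier expansion of $\overline{f}$ on $H$. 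Reading off the coefficient attached to the coset $\chi_w H^\perp$ gives exactly the claimed formula.

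There is no real obstacle here; the only mildly delicate point is bookkeeping, namely keeping track of when $\chi_z$ is being evaluated at a point $x \in \F_p^r$ outside $H$ (where it is a genuine character of $\F_p^r$) versus being restricted to $H$ and viewed as a character of $H$ depending only on its coset modulo $H^\perp$. Once this distinction is made carefully, the identity follows from uniqueness of the Fourier coefficients on $H$.
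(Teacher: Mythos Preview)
Your argument is correct and coincides with the paper's proof: both expand $\overline{f}(y)=\sum_{\chi_z}a_{\chi_z}\chi_z(x)\chi_z(y)$ via multiplicativity of the characters, regroup the sum over cosets of $H^\perp$ using that $\chi_z|_H$ depends only on $\chi_z H^\perp$, and then read off the Fourier coefficients on $H$. Your added remark about distinguishing $\chi_z(x)$ from $\chi_z|_H$ is a welcome clarification but not a departure from the paper's approach.
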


\begin{proof}
We have that
\begin{align*}
\overline{f}(y)&=\sum_{\chi_w\in\widehat{\F_p^r}} a_{\chi_w}\chi_w(x+y)\\
&=\sum_{\chi_w\in\widehat{\F_p^r}} a_{\chi_w}\chi_w(x)\chi_w(y)\\
&=\sum_{\chi_w H^\perp\in\widehat{\F_p^r}/H^\perp}\sum_{\chi_z\in\chi_w H^\perp}a_{\chi_z}\chi_z(x)\chi_z(y)\\
&=\sum_{\chi_w H^\perp\in\widehat{\F_p^r}/H^\perp}\chi_w(y)\sum_{\chi_z\in\chi_w H^\perp}a_{\chi_z}\chi_z(x)
\end{align*}
from which the lemma follows.
\end{proof}

We will now present the second reconstruction theorem. Here we use $f|_{\Gamma_2}$ in order to construct $\widehat{f}'$.

\begin{theor}\label{t9'}
For $1\leq j\leq n$ and $x\in \F_p^r$ let $g_{j,x}(y)=f(x+y)$, $y\in H_j$, and $c_{j,x}=F_j(g_{j,x})$, where $F_j$ is the Fourier transform for $H_j$. The following algorithm returns $\widehat{f}'$ such that $|\supp(\widehat{f}')|\leq t$ and $\|\widehat{f}-\widehat{f}'\|_1\leq (1+3\sqrt{2})\|\epsilon\|_1$.

\begin{algorithmic}
\State Set $\widehat{f}':=(0,\ldots,0)$, $\widehat{f}'':=(0,\ldots,0)$, $Y:=(0,\ldots,0)$ and $Z:=()$

\For {$j$ from $1$ to $n$}


\For {$2t-1$ values of $\chi_w H_j^\perp$ for which $|c_{j,0}(\chi_w H_j^\perp)|$ is largest}

\State $\overline{\chi_w}:=()$

\For {$1\leq l\leq r-h$}

\State Append $\chi_{w_l}(\Span_{\F_p} \{x_{l,j}\})^\perp$ obtained from the algorithm in \linebreak Lemma \ref{t11} for $\overline{f}(a)=c_{j,ax_{l,j}}(\chi_w H_j^\perp)$, $a\in H$ to $\overline{\chi_w}$ 

\EndFor

\State Reconstruct $\chi_w$ from $\chi_w H_j^\perp$ and $\overline{\chi_w}$

\State $Y(\chi_w):=Y(\chi_w)+1$

\If {$Y(\chi_w)=2t(\lceil r/h\rceil-1)+1$}

\State Append $\chi_w$ to $Z$

\EndIf

\EndFor


\EndFor

\For {$\chi_w\in Z$}

\State $X:=()$

\For {$j$ from $1$ to $n$}

\State Append $c_{j,0}(\chi_w H_j^\perp)$ to $X$

\EndFor

\State $\widehat{f}''(\chi_w):=\Median(\{\Re(x):x\in X\})+i\Median(\{\Im(x):x\in X\})$

\EndFor

\For {$t$ values of $\chi_w$ for which $|\widehat{f}''(\chi_w)|$ is largest}

\State $\widehat{f}'(\chi_w):=\widehat{f}''(\chi_w)$

\EndFor

\noindent \Return $\widehat{f}'$.
\end{algorithmic}
\end{theor}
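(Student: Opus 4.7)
The plan is to adapt the proof of Theorem \ref{t2}, inserting a character-recovery step that uses the extra samples along the lines $K x_{l,j}$ to pinpoint individual characters inside each heavy coset. The new ingredient is Lemma \ref{l7}: for each fixed $j$, $l$ and each coset $\chi_w H_j^\perp$, the auxiliary function
\[
\overline{f}(a) \;=\; c_{j, a x_{l,j}}(\chi_w H_j^\perp) \;=\; \sum_{\chi_z \in \chi_w H_j^\perp} a_{\chi_z}\, \omega_p^{a\,(z\cdot x_{l,j})},\qquad a\in\F_p,
\]
is a Fourier signal on $\F_p$ whose coefficient at frequency $y\in\F_p$ equals $\sum_{\chi_z\in\chi_w H_j^\perp,\, z\cdot x_{l,j}=y} a_{\chi_z}$. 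Because $\Span_{\F_p}\{H_j,x_{1,j},\ldots,x_{r-h,j}\}=\F_p^r$, a character $\chi_w$ is uniquely determined by its coset $\chi_w H_j^\perp$ together with the values $w\cdot x_{l,j}\in\F_p$ for $l=1,\ldots,r-h$; this is exactly what the ``reconstruct $\chi_w$'' step of the algorithm does.

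The argument then proceeds in three steps. \emph{Step 1 (large coefficients are detected).} By Lemma \ref{l8}, every $\chi_{z'}$ with $|a_{\chi_{z'}}|>2\|\epsilon\|_1/t$ admits at least $2t(m-1)+1$ indices $j$ for which $\sum_{\chi_y\in\chi_{z'}H_j^\perp\setminus\{\chi_{z'}\}}|a_{\chi_y}|\leq \|\epsilon\|_1/t$. For each such $j$, repeating the Equation~\eqref{eq2}/triangle-inequality argument from the proof of Theorem~\ref{t2} shows that $\chi_{z'}H_j^\perp$ ranks among the $2t-1$ cosets of largest $|c_{j,0}|$; moreover the same coset bound implies the dominance hypothesis of Lemma~\ref{t11} for $\overline{f}$, because the mass of $\overline{f}$ at frequencies $y\neq z'\cdot x_{l,j}$ is at most $\sum_{\chi_z\in\chi_{z'}H_j^\perp\setminus\{\chi_{z'}\}}|a_{\chi_z}|\leq \|\epsilon\|_1/t<|a_{\chi_{z'}}|/2$. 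Hence Lemma~\ref{t11} returns $z'\cdot x_{l,j}$ for every $l$, the reconstructed character is exactly $\chi_{z'}$, and $Y(\chi_{z'})$ eventually reaches $2t(m-1)+1$, so $\chi_{z'}$ is appended to $Z$. \emph{Step 2 (accurate medians).} For any $\chi_w\in Z$, Lemma~\ref{l8} combined with Equation~\eqref{eq1} gives $|c_{j,0}(\chi_w H_j^\perp)-a_{\chi_w}|\leq \|\epsilon\|_1/t$ for at least $2t(m-1)+1>n/2$ indices $j$, and the coordinate-wise median then yields $|\widehat{f}''(\chi_w)-a_{\chi_w}|\leq \sqrt{2}\,\|\epsilon\|_1/t$, verbatim as in Theorem~\ref{t2}. \emph{Step 3 (final $\ell^1$ bound).} With Steps 1 and 2 in place, the partition $T\setminus T'=T_1\cup T_2$ and the top-$t$ selection argument at the end of the proof of Theorem~\ref{t2} deliver $\|\widehat{f}-\widehat{f}'\|_1\leq(1+3\sqrt{2})\|\epsilon\|_1$ without change; the support bound $|\supp(\widehat{f}')|\leq t$ is immediate from the algorithm.

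The main obstacle is Step~1: one must carefully verify that the decoding via Lemma~\ref{t11} really succeeds on every ``good'' coset, since the frequencies of $\overline{f}$ depend nontrivially on how the projection $\chi_z\mapsto z\cdot x_{l,j}$ collapses the coset onto $\F_p$. The key observation is that this projection can only merge characters together or send them to distinct frequencies, so the total $\ell^1$ mass away from $y'=z'\cdot x_{l,j}$ is bounded above by the total $\ell^1$ mass of the non-dominant characters in the coset, which Lemma~\ref{l8} controls by $\|\epsilon\|_1/t$. Once that point is settled, the remainder of the proof is a transcription of the Theorem~\ref{t2} argument, with $c_j$ replaced by $c_{j,0}$ throughout.
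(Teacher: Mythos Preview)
Your overall architecture matches the paper exactly: reduce to the proof of Theorem~\ref{t2} once you have shown that every $\chi_{z'}$ with $|a_{\chi_{z'}}|>2\|\epsilon\|_1/t$ lands in $Z$, and use Lemma~\ref{l7} to identify $\overline f(a)=c_{j,ax_{l,j}}(\chi_w H_j^\perp)$ as a one-dimensional Fourier signal. Steps~2 and~3 are indeed verbatim transcriptions of the Theorem~\ref{t2} argument.

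There is, however, a real gap in your verification of the Lemma~\ref{t11} hypothesis in Step~1. You bound the off-frequency mass by
\[
\sum_{y\neq y'} |d_y|\;\le\;\sum_{\chi_z\in\chi_{z'}H_j^\perp\setminus\{\chi_{z'}\}}|a_{\chi_z}|\;\le\;\frac{\|\epsilon\|_1}{t}\;<\;\frac{|a_{\chi_{z'}}|}{2},
\]
but the dominance condition of Lemma~\ref{t11} compares the off-frequency mass with $|d_{y'}|/2$, not with $|a_{\chi_{z'}}|/2$. These are different: some of the noise characters in $\chi_{z'}H_j^\perp\setminus\{\chi_{z'}\}$ may project to the \emph{same} frequency $y'=z'\cdot x_{l,j}$ and cancel against $a_{\chi_{z'}}$, so one only has $|d_{y'}|\ge |a_{\chi_{z'}}|-c$ where $c$ is the portion of the coset noise that collapses onto $y'$. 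With your loose bound $\sum_{y\neq y'}|d_y|\le\|\epsilon\|_1/t$ and the corresponding $|d_{y'}|\ge |a_{\chi_{z'}}|-\|\epsilon\|_1/t>\|\epsilon\|_1/t$, you get only $|d_{y'}|>\sum_{y\neq y'}|d_y|$, which falls short of the required factor $2$.

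The paper closes this gap by tracking $c$ explicitly: writing $c=v\,\|\epsilon\|_1/t$ with $0\le v\le 1$, one has simultaneously $|d_{y'}|>(2-v)\|\epsilon\|_1/t$ and $\sum_{y\neq y'}|d_y|\le(1-v)\|\epsilon\|_1/t$, and the elementary inequality $2-v\ge 2(1-v)$ then delivers the strict dominance $|d_{y'}|>2\sum_{y\neq y'}|d_y|$. Once you insert this accounting, your proof is complete and coincides with the paper's.
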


In the algorithm $\widehat{f}'$, $\widehat{f}''$ and $Y$ are vectors indexed by elements of $\widehat{\F_p^r}$. To see how $\chi_w$ can be reconstructed from $\chi_w H_j^\perp$ and $\overline{\chi_w}$ see the proof of the theorem.

\begin{proof}
Since $c_{j,0}=F_j(f|_{H_j})$ it is enough, from the proof of Theorem \ref{t2}, to prove that that if $|a_{\chi_w}|>2\|\epsilon\|_1/t$ then $\chi_w\in Z$, that is in this case $Y(\chi_w)>2t(m-1)$.

We will first show how $\chi_w$ can be reconstructed from $\chi_w H_j^\perp$ and $\overline{\chi_w}$. We will assume that $H_j=\{(0,\ldots,0,a_{r-h+1},\ldots,a_r)\in \F_p^r\}$ and that $x_{l,j}=(0,\ldots,0,1,0,\ldots,0)$ with $l$-th coefficient 1 and all other coefficients 0 for $1\leq l\leq r-h$ (this can always be assumed, up to changing the basis of $\F_p^r$).

We easily have that
\begin{align*}
H_j^\perp&=\{\chi_{(z_1,\ldots,z_{r-h},0,\ldots,0)}:z_i\in\F_p\},\\
(\Span_{\F_p}\{ x_{l,j}\})^\perp&=\{\chi_{(z_1,\ldots,z_{l-1},0,z_{l+1},\ldots,z_r)}:z_i\in\F_p\}.
\end{align*}
So, using that $\widehat{\F_p^r}/H^\perp\cong \widehat{H}$ for any subgroup $H\leq \F_p^r$,
\begin{align*}
\widehat{H_j}&\cong \widehat{\F_p^r}/H_j^\perp=\{\chi_{(0,\ldots,0,y_{r-h+1},\ldots,y_r)}H_j^\perp:y_i\in\F_p\},\\
\widehat{\Span_{\F_p}\{ x_{l,j}\}}\!&\cong \widehat{\F_p^r}/(\Span_{\F_p}\{ x_{l,j}\})^\perp\!=\!\{\chi_{(0,\ldots,0,y_l,0,\ldots,0)}(\Span_{\F_p}\{ x_{l,j}\})^\perp:y_l\in\F_p\}.
\end{align*}
If $\chi_w H_j^\perp=\chi_{(0,\ldots,0,y_{r-h+1},\ldots,y_r)}H_j^\perp$ and $(\overline{\chi_w})_l=\chi_{(0,\ldots,0,y_l,0,\ldots,0)}(\Span_{\F_p}\{ x_{l,j}\})^\perp$ for $1\leq l\leq r-h$ then $\chi_w=\chi_{(y_1,\ldots,y_r)}$. Notice that in this case
\begin{align*}
(\overline{\chi_w})_1&=\{\chi_z:z_1=y_1&&&&&&\},\\
\vdots&&&\ddots\\
(\overline{\chi_w})_{r-h}&=\{\chi_z:&&&z_{r-h}=y_{r-h}&&&\},\\
\chi_w H_j^\perp&=\{\chi_z:&&&&&z_{r-h+1}=y_{r-h+1},\ldots,z_r=y_r&\},
\end{align*}
In particular $\chi_w$ is the only element contained in $\chi_w H_j^\perp$ and in all of the $(\overline{\chi_w})_l$.

For $\chi_w\in\widehat{\F_p^r}$ we have by Lemma \ref{l8} that if
\[J=\{1\leq j\leq n:\sum_{\chi_z\in\chi_w H_j\setminus\{ \chi_w\}}|a_{\chi_z}|\leq\|\epsilon\|_1/t\}\]
then $|J|>2t(m-1)$. Assume now that $|a_{\chi_w}|>2\|\epsilon\|_1/t$ and $j\in J$. Then $|c_{j,0}(\chi_w H_j^\perp)|>\|\epsilon\|_1/t$ and so it is between the $2t-1$ largest values of $|c_{j,0}(\chi_b H_j^\perp)|$ from Equation \eqref{eq2} from the proof of Theorem \ref{t2}. We will now show that we reconstruct $\chi_w$ from $\chi_w H_j^\perp$ and $\overline{\chi_w}$. This will prove the theorem, since then $Y(\chi_w)>2t(m-1)$.

Clearly $\chi_w\in\chi_w H_j^\perp$. So  it is enough to prove for $1\leq l\leq r-h$ that $\chi_w\in(\overline{\chi_w})_l(\Span_{\F_p}\{ x_{l,j}\})^\perp$. Using Lemma \ref{l7} we have that
\[\overline{f}(a)=c_{j,ax_{l,j}}(\chi_w H_j^\perp)=\sum_{\chi_z\in\chi_w H^\perp}\chi_z(ax_{l,j})a_{\chi_z}\]
As $\Span_{\F_p}\{ x_{l,j}\}\cong \F_p$ we can define $\phi_b(a):=\chi_b(ax_{l,j})$ for $a\in\F_p$ and $\chi_b\in\widehat{\F_p^r}$. Notice that if $\chi_b(\Span_{\F_p}\{ x_{l,j}\})^\perp=\chi_k(\Span_{\F_p}\{ x_{l,j}\})^\perp$ if and only if $\phi_b=\phi_k$. So
\[\overline{f}=\sum_{\chi_b(\Span_{\F_p}\{ x_{l,j}\})^\perp\in \widehat{\F_p^r}/(\Span_{\F_p}\{ x_{l,j}\})^\perp}d_{\phi_b} \phi_b\]
where the coefficients $d_{\phi_b}$ are given by
\[d_{\phi_b}=\sum_{\chi_k\in(\chi_w H_j^\perp)\cap (\chi_b(\Span_{\F_p}\{ x_{l,j}\})^\perp)}a_{\chi_k}.\]

Let
\[c:=\sum_{\chi_w\not=\chi_k\in(\chi_w H_j^\perp)\cap (\chi_w(\Span_{\F_p}\{ x_{l,j}\})^\perp)}|a_{\chi_b}|.\]
Since $j\in J$ we have that $\sum_{\chi_w\not=\chi_b\in\chi_w H_j^\perp}|a_{\chi_b}|\leq \|\epsilon\|_1/t$ (in particular $c=v\|\epsilon\|_1$ for some $0\leq v\leq 1$) and $|a_{\chi_w}|>2\|\epsilon\|_1$. So 
\[|d_{\phi_w}|\geq |a_{\chi_w}|-c>(2-v)\|\epsilon\|_1\]
and
\[\sum_{\phi_b\not=\phi_w}|d_{\phi_b}|\leq 
\sum_{\chi_b\in \chi_w H_j^\perp\setminus\{\chi_w\}}|a_{\chi_b}|-c\leq(1-v)\|\epsilon\|_1.\]
Since $(2-v)\geq 2(1-v)$ for $0\leq v\leq 1$, in this case the algorithm in Lemma \ref{t11} returns $\phi_w$, which corresponds to $\chi_w(\Span_{\F_p}\{ x_{l,j}\})^\perp$ under the isomorphism $\widehat{\F_p^r}/(\Span_{\F_p}\{ x_{l,j}\})^\perp\rightarrow\widehat{\F_p}$ sending $\chi_b(\Span_{\F_p}\{ x_{l,j}\})^\perp\mapsto \phi_b$. So $\chi_w\in(\overline{\chi_w})_l(\Span_{\F_p}\{ x_{l,j}\})^\perp$, which concludes the proof of the theorem.
\end{proof}

\section{Remarks to the extension to groups of the form $(\Z/p^a\Z)^r$}

One can extend the algorithms to work also on groups of the form $(\Z/p^a\Z)^r$ with $a\geq 1$. This can be done as follows:

\begin{enumerate}
 \item
 Construct $m$-generating subspaces $H_1,\ldots,H_n\subseteq\F_p^r$ of dimension $h$ and find a basis $\{z_{1,i},\ldots,z_{h,i}\}$ for each subspace $H_i$.
 
 Assuming coefficients of the vectors $z_{j,i}$ are integers, define $\overline{H_i}:=\langle z_{1,i},\ldots,z_{h,i}\rangle\subseteq (\Z/p^a\Z)^r$ (the subgroup generated by $z_{1,i},\ldots,z_{h,i}$) for $1\leq i\leq n$.
 
 \item
 Extend Lemma \ref{t11} to work for $\Z/p^a\Z$ instead of only for $\F_p$ by taking $k$ maximal with $2^k\geq p^a/3$ and changing $p$ to $p^a$.
 
 \item
 Take $\overline{x_{j,i}}\in\Z/p^a\Z$ with $\langle H_i,\overline{x_{1,i}},\ldots,\overline{x_{r-h,i}}\rangle=(\Z/p^a\Z)^r$.
 
 \item
 Define $\overline{\Gamma_1}$ and $\overline{\Gamma_2}$ similarly to $\Gamma_1$ and $\Gamma_2$.
 
 \item
 In the algorithms substitute $\F_p^r$ with $(\Z/p^a\Z)^r$ and $\widehat{\F_p^r}$ with $\widehat{(\Z/p^a\Z)^r}$.
 \end{enumerate}
 
 It can be proved that the set $\overline{H_i}$ are $m$-generating for $(\Z/p^a\Z)^r$. This depend on square matrices with integer coefficients being singular when reduced to $\Z/p^a\Z$ exactly when the determinant is divisible by $p$, independently of the value of $a$. Also one can prove that
 \[(\Z/p^a\Z)^r\cong \overline{H_i}\times\langle \overline{x_{1,i}}\rangle\times\cdots\times\langle\overline{x_{r-h,i}}\rangle,\]
 which is needed in order to adapt the proof of the theorems. However from $|H_i|=p^{ah}$ and $p^h\geq 4t$ (if $h<r$) we have
  \[|\overline{\Gamma_1}|\geq|H_1|= p^{ah}\geq 4^at^a.\]
 So $|\overline{\Gamma_1}|$ cannot be quadratic in $t$ (unless possibly for $a\leq 2)$. Looking at upper bound on $|\overline{\Gamma_1}|$ we obtain 
 \[|\overline{\Gamma_1}|\leq np^{ah}<16pt^2r^2p^{(a-1)h}.\]
Again as $p^h\geq 4t$ for $h<r$,
\[16pt^2r^2p^{(a-1)h}\geq 16pt^2r^2(4t)^{a-1}=4^{a+1}pt^{a+1}r^2.\]
So, also for $a=2$, the given upper bound is not quadratic in $t$. For this reason we did not extend the paper to groups of the form $(\Z/p^a\Z)^r$.

\section{Further remarks}\label{s3}

From
\[\left|\widehat{\1_H}(\chi_y)\right|=\left|p^{-r}\sum_{x\in H}\chi_y(x)\right|=p^{-r}|H|\1_{H^\perp}(\chi_y)=p^{h-r}\1_{H^\perp}(\chi_y)\]
for any subspace $H$ of $\F_p^r$ of dimension $h$, it can be checked that, if $H_1,\ldots,H_n$ are $m$-generating, then $\sum_i\left|\widehat{\1_{H_i}}(\chi_y)\right|\leq (m-1)p^{h-r}$ for every $1\not=\chi_y\in\widehat{\F_p^r}$ and $\sum_i\left|\widehat{\1_{H_i}}(1)\right|=np^{h-r}:=C$. In particular it can be proved that the matrix with columns labeled by the elements of $\cup_{j\in I}H_j$ (taking the union as a multiset, so that some columns may be repeated), rows labeled by the elements of $\widehat{\F_p^r}$ and coefficients $\chi_y(g)/\sqrt{C}$ satisfies the RIP property of rank $t$ and constant $(t-1)(m-1)/n$. In particular if $n>2(t-1)(m-1)$ and the subspaces $H_i$ are $m$-generating then we could approximate any $t$-sparse Fourier signal also using reconstruction algorithms based on the RIP property.

Some slight variation of the algorithms presented in Theorem \ref{t2} and \ref{t9'} (based on similar algorithms from \cite{MR2628828}) can also be constructed taking $n$ to be $2t(m-1)$, which however returns functions with
\[\|(\widehat{f}+\epsilon)-\widehat{f}'\|_1\leq (1+2t)\|\epsilon\|_1.\]
If $\|\epsilon\|_1>0$ these algorithms give worse approximations than the ones described here. However if $\|\epsilon\|_1=0$ then also these algorithms reconstruct $f$ and they work on smaller (about half the size) sets. Since sizes of sampling sets and complexities of the algorithms are about the same (they differ only by a small multiplicative constant), these algorithms have not been presented here.

\section{Acknowledgments}

The author thanks Hartmut F\"uhr for help in reviewing the paper.

The results presented here are part of the author Ph.d. thesis \cite{lm}. The results have also already been presented in the form of an extended abstract in \cite{lm2}.

The author was supported during her Ph.d. by the DFG grant GRK 1632 for the Graduiertenkolleg Experimentelle und konstruktive Algebra at RWTH Aachen University.

\end{document}